\newtheorem{de}{Definition}
\newtheorem{ex}{Example}
\newtheorem{pr}{Proposition}
\newtheorem{lm}{Lemma}
\newtheorem{co}{Corollary}
\newtheorem{te}{Theorem}
\newtheorem{re}{Remark}
\theoremstyle{definition}
\newcommand{\dif}{\mathrm{d}}
\newcommand{\ii}{\mathrm{i}}
\newcommand{\norm}[1]{\Vert #1 \Vert}  
\newcommand{\abs}[1]{\vert #1 \vert}  
\newcommand{\gr}{\mathrm{grad}}
\newcommand{\CC}{\mathbb{C}}
\newcommand{\RR}{\mathbb{R}}   
\newcommand{\ZZ}{\mathbb{Z}} 
\newcommand{\NN}{\mathbb{N}}
\newcommand{\Ss}{\mathbb{S}}
\DeclareMathOperator{\di}{div}
\DeclareMathOperator{\cu}{curl}
\begin{document}

\title[Steady Euler flows on $\Ss^3$ and Sasakian 3-manifolds]{Steady Euler flows on the 3-sphere and other Sasakian 3-manifolds}

\author{Radu Slobodeanu}

\address{Faculty of Physics, \\ University of Bucharest \\ P.O. Box Mg-11, Bucharest-M\u agurele, \\ RO-077125, Romania}
\email{radualexandru.slobodeanu@g.unibuc.ro}

\thanks{The author is grateful to the anonymous referee for many valuable comments that improved the manuscript.}

\subjclass[2010]{35Q31, 37C10, 53B50, 53C25, 74G05.}

\keywords{Sasakian manifold, isoparametric, ideal fluid, explicit solutions, isolation.}

\maketitle

\medskip

\begin{abstract}
We present new steady Euler solutions on the (round) 3-sphere, that bifurcate from an \textit{ansatz} proposed in \cite{kam}, showing that these previously known solutions are not isolated. We also extend this ansatz to any Sasakian 3-manifold, such as the Heisenberg group and $SL(2, \RR)$. 
\end{abstract}

\section{Introduction}
A steady Euler field on a Riemannian $3$-manifold $(M, g)$ is a tangent vector field $u$ on $M$ that satisfies the \emph{stationary Euler equations}
\begin{equation} \label{eul}
\nabla_u u = - \gr \, p, \qquad \di u = 0
\end{equation}
for some pressure function $p$ on $M$. This notion is the natural extension of the velocity of an incompressible inviscid fluid (of constant density) in equilibrium, in a domain of the euclidean space. For more informations on topological and dynamical aspects related to fluid mechanics, see the monograph \cite{arn} and the introductory paper \cite{seltop}.

An equivalent reformulation of \eqref{eul} is
\begin{equation} \label{eulb}
u \times \cu u = \gr \, b, \qquad \di u = 0,
\end{equation}
where $b=p+\frac{1}{2}\abs{u}^2$ is the \textit{Bernoulli function} and $\cu u = (\ast \dif u^\flat)^\sharp$ the \textit{vorticity field} (here $\ast$ is the Hodge star operator on $(M,g)$). It is well-known that the Bernoulli function $b$ must be constant along a steady Euler flow (i.e. $u(b)=0$). As we can see from \eqref{eulb}, $b$ is even (locally) constant if a divergence-free vector field aligns with its own curl, that is, if $u$ belongs to a special class of steady Euler fields, called \textit{Beltrami fields}. These observations lead to important dynamical consequences for a steady Euler flow, cf. Arnold's structure theorem \cite{arn}, in compact analytic setting: either $b$ is not a constant and (away from a codimension at least 1 critical set) the stream and vortex lines of $u$ are constrained to lie on the
regular level sets $b^{-1}(c)$ all of whose connected components are tori (provided that $M$ is boundaryless), so that $u$ has \textit{laminar behaviour}; or $b$ is a constant and $u$ must be a Beltrami field, which may exhibit \textit{turbulent behaviour} (see \cite{seltop} for more general statements). 

In this paper we are interested in constructing smooth explicit solutions (with non-constant $b$) of the stationary Euler equations on the round 3-sphere $\Ss^3$ and other spaces with similar geometry. We mention that some explicit constructions of Beltrami fields ($b$ constant) on the 3-sphere and the 3-torus and their (contact) topological features  have been recently studied in \cite{enc, pssnew}.

The starting point is the following simple \textit{ansatz} of steady flow on $\Ss^3$, proposed in \cite{kam} (see also \cite{kkpsnew}): 
\begin{equation} \label{kkps}
u=A(\cos^2 s) \xi + B(\cos ^ 2 s) \xi^\prime.
\end{equation}
where $(\cos s \, e^{i\phi_1}, \sin s \, e^{i\phi_2})$ is the "toroidal" parametrisation of $\Ss^3$, 
$\xi = \partial_{\phi_1} + \partial_{\phi_2}$ is the Hopf vector field, and $\xi^\prime = \partial_{\phi_1} - \partial_{\phi_2}$ the anti-Hopf vector field (see Appendix for details).
Any vector field $u$ of the form \eqref{kkps} is a (globally defined) steady solution of the Euler equations on $\Ss^3$, for any choice of (smooth) functions $A$ and $B$, and will be called a \textit{KKPS solution} in what follows. Notice that $\cos^2 s$ is a first integral for \textit{both} fields $\xi$ and $\xi^\prime$, and that \eqref{kkps} is reminiscent of the local description of steady flows \cite[Prop.1.5.]{arn} in terms of coordinates that mimic the action-angle variables. A very important property of the KKPS solutions is that not only the Bernoulli function is conserved along the flow but also the velocity norm: $u(\abs{u}^2)=0$.  Some KKPS vector field solutions are $S$-integrable (i.e. they span the 1-dimensional fibres of a globally defined map which is submersion on a dense subset), cf. \cite{slo}.  See \cite{sslo} for an $S$-integrable steady Euler solution that do not belong to the KKPS family. 
\medskip

Two natural questions regarding the class of KKPS solutions arise:

\begin{quote}
\noindent \textbf{1.} Can this class of solutions be extended either by replacing $\xi$ and $\xi'$ with other Killing fields (or with other curl-eigenvectors), and does this construction have an analogue in other spaces with similar geometry (e.g. contact metric manifolds, space forms)?

\noindent \textbf{2.} Are these solutions isolated in the space of solutions on $\Ss^3$?
\end{quote}

To have a brief overview of the first question, let us consider a vector field $u = f_1 K_1 + f_2 K_2$ on $(M,g)$, where $K_1, K_2$ are Killing fields of constant norm (supposed to exist on $M$) and $f_1$, $f_2$ are first integrals of $K_1$ and $K_2$ respectively (i.e. $K_i(f_i)=0$). A quick check shows us that:
$$
\nabla_u u = - f_1 f_2 \, \gr (g(K_1, K_2)) + K_1(f_1 f_2)K_2 + K_2(f_1 f_2)K_1, \quad \di u = 0.
$$
This shows that some cumbersome conditions should be further imposed in order to ensure that $u$ is a steady Euler field (e.g. $f_1$, $f_2$ common first integrals for $K_1$ and $K_2$ and $g(K_1, K_2)=\mathcal{F}(f_1 f_2)$).

In special contexts, it is nevertheless easy to pinpoint the analogue of the KKPS ansatz. For instance, in the negative constant curvature space $\mathbb{H}^3$, seen via the half-space model $\{(x,y,z) : z > 0 \}$,  the following is a steady Euler field, for any choice of $A$ and $B$,
\begin{equation} \label{kkpsHyp}
u=A(z) \partial_x + B(z) \partial_y. 
\end{equation}

The second question (that may be reformulated as: \textit{are all
steady states in the vicinity of a KKPS steady solution also KKPS solutions}?) is a natural and relevant one especially in view of a recent unstability result \cite{kkpsnew} claiming that a certain subclass of KKPS flows in $\Ss^3$, called \textit{shear flows}, are "unstable" in the following sense: the trajectories of the Euler dynamical system starting at certain arbitrarily small perturbations of a shear flow $u$ do not converge to any steady Euler solution contained in a small vicinity of $u$.

In order to answer this second question, we should be able to find out whether the solutions \eqref{kkps} admit (1-parameter) continuous deformations (not leaving the space of solutions) such that there exists no isometry relating the deformed solutions with a KKPS solution. In this respect the following notion will be very useful.
\begin{de}[\cite{const}]
A steady Euler solution $u$ on $(M,g)$ is called \emph{localizable} if its norm is conserved along the flow: $u(\abs{u}^2)=0$ everywhere on $M$.
\end{de}

We can use the localizability property to distinguish between solutions, since we have the following

\begin{lm} \label{conservnorm}
Localizability property is invariant under isometries.
\end{lm}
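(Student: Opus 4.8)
The plan is to show that if $\phi\colon (M,g)\to(N,h)$ is an isometry and $u$ is a (localizable) steady Euler solution on $(M,g)$, then the pushed-forward field $v:=\phi_*u$ is again a \emph{localizable} steady Euler solution on $(N,h)$. First I would invoke the standard fact that an isometry intertwines all the operators appearing in \eqref{eul}: $\phi_*(\nabla_XY)=\nabla_{\phi_*X}(\phi_*Y)$, $\phi_*(\gr f)=\gr(f\circ\phi^{-1})$ and $(\di X)\circ\phi^{-1}=\di(\phi_*X)$. Consequently $v$ solves \eqref{eul} on $(N,h)$, with pressure $p\circ\phi^{-1}$, so asking whether $v$ is localizable is meaningful; that is the only remaining thing to verify.

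For this I would track how the two ingredients of the condition $u(\abs{u}^2)=0$ behave under $\phi$. Being an isometry, $\phi$ makes the norm natural: $\abs{v}^2_h\circ\phi=\abs{u}^2_g$. And for any smooth $F$ on $N$ the directional derivative is natural under pushforward, $(\phi_*u)(F)\circ\phi=u(F\circ\phi)$, directly from $v_{\phi(x)}=\dif\phi_x(u_x)$. Taking $F=\abs{v}^2_h$ and combining the two identities yields
$$
\bigl(v(\abs{v}^2_h)\bigr)\circ\phi \;=\; u\bigl(\abs{v}^2_h\circ\phi\bigr) \;=\; u(\abs{u}^2_g),
$$
which vanishes identically when $u$ is localizable; since $\phi$ is a diffeomorphism this gives $v(\abs{v}^2_h)=0$ on $N$. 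Running the same reasoning with $\phi^{-1}$ gives the reverse implication.

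I do not expect a genuine obstacle here: the statement is a naturality fact, and the only points deserving a little care are (i) confirming that the pushforward of a steady Euler solution along an isometry is still one — which is exactly the three intertwining identities above — and (ii) keeping the compositions with $\phi$ and $\phi^{-1}$ in the right places when transferring ``norm'' and ``directional derivative'' between $M$ and $N$. One could equivalently phrase everything dynamically: the flow of $\phi_*u$ is the $\phi$-conjugate of the flow of $u$, localizability says precisely that $\abs{u}^2$ is a first integral of the latter flow, and $\phi$-conjugation carries first integrals of the flow of $u$ to first integrals of the flow of $\phi_*u$, which is the assertion.
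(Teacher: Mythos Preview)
Your argument is correct and is essentially the same naturality proof as the paper's: the paper computes $u(\abs{u}^2)=2g(\nabla_u u,u)$ and uses that $\dif\varphi$ carries $\nabla_u u$ to $\nabla_v v$ while preserving inner products, whereas you bypass the connection and track directly the naturality of $\abs{\cdot}^2$ and of the directional derivative under pushforward. The two computations amount to the same identity $(v(\abs{v}^2))\circ\phi=u(\abs{u}^2)$, so there is no substantive difference.
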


\begin{proof}
Let $(M, g)$ be a Riemannian manifold and $u$ and $v$ be two tangent vector fields on $M$, related by an isometry $\varphi \in \mathrm{Iso}(M)$: $\dif \varphi (u) = v\circ \varphi$. Then $\dif \varphi (\nabla_u u) = \nabla_v v$, and we have $g\big(\nabla_u u, u \big) = g\big( \dif \varphi (\nabla_u u), \dif \varphi(u)\big)= g\big(\nabla_v v, v \big)$, that is $u(\abs{u}^2)=v(\abs{v}^2)$, from which the conclusion follows. 
\end{proof}

The organization of the paper is as follows. In Section 2 we propose a new ansatz \eqref{rs} that extends the KKPS ansatz \eqref{kkps} to any Sasakian 3-manifold, we give sufficient conditions that it has to satisfy in order to yield steady Euler solutions and we provide some examples, thus partially answering the first Question above. In Section 3 we answer in the negative the second Question, on the 3-sphere, by proving the non-isolation of (some) KKPS solutions; this constitutes our main result:

\begin{te}\label{maint}
The family of KKPS solutions on $\Ss^3$ is not isolated: there exist steady Euler solutions (with non-constant Bernoulli function $b$) as well as Beltrami fields ($b$  constant) that are arbitrarily $C^k$-close to the KKPS family, without being members of it.
\end{te}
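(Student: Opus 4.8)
The plan is to exhibit, arbitrarily $C^k$-close to a prescribed field of the form \eqref{kkps}, steady Euler fields that are not themselves of this form, treating the Beltrami case and the non-constant-$b$ case by separate tricks. Two features of a KKPS field $u=A(\cos^2 s)\,\xi+B(\cos^2 s)\,\xi^\prime$ will serve to detect novelty: it is invariant under the flow of $\xi$ (since $\cos^2 s$, $\xi$ and $\xi^\prime$ all are, so $\mathcal{L}_\xi u=0$), and its squared norm $\abs{u}^2=A^2+2AB\cos 2s+B^2$ is a function of $\cos^2 s$ alone. In addition, by Lemma~\ref{conservnorm} together with the localizability of every KKPS field, any steady solution that is \emph{not} localizable cannot be isometric to a member of \eqref{kkps}.

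\emph{Beltrami fields.} Fix $c\neq 0$, so that $u_0=c\,\xi$ is a member of \eqref{kkps} and a curl eigenfield, $\cu u_0=2u_0$. The $2$-eigenspace of $\cu$ on $\Ss^3$ is $3$-dimensional (the left-invariant fields), so one can pick $w$ in it with $[\xi,w]\neq 0$. Then for every $\varepsilon\in\RR$ the field $u_\varepsilon:=c\,\xi+\varepsilon w$ is again a curl eigenfield with eigenvalue $2$, hence divergence-free and Beltrami — a steady Euler field with locally, hence globally, constant $b$ — and $u_\varepsilon\to u_0$ in every $C^k$-norm. Since $\mathcal{L}_\xi u_\varepsilon=\varepsilon[\xi,w]\neq 0$ while every field \eqref{kkps} is $\xi$-invariant, $u_\varepsilon$ is not of the form \eqref{kkps} when $\varepsilon\neq 0$.

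\emph{Solutions with non-constant $b$.} These I would obtain from the extended ansatz \eqref{rs} of Section~2, whose simplest instance is $u=A\,\xi$ with $A$ any first integral of $\xi$ — equivalently $A=\tilde A\circ\pi$ for the Hopf projection $\pi\colon\Ss^3\to\Ss^2$. On any Sasakian $3$-manifold one checks $\di(A\xi)=0$, $\cu(A\xi)=2A\xi-\varphi\,\gr A$ and $(A\xi)\times\cu(A\xi)=\tfrac12\gr(A^2)$, so $u=A\xi$ solves \eqref{eulb} with $b=\tfrac12 A^2+\mathrm{const}$ (indeed $\nabla_u u=0$, the flow lines being Hopf great circles). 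Choosing $A=A_0(\cos^2 s)+\varepsilon\,\psi$ with $A_0$ a fixed profile and $\psi$ a fixed, \emph{non-axially-symmetric} pullback from $\Ss^2$, the field $u_\varepsilon=A_\varepsilon\,\xi$ is a steady Euler field with non-constant $b$, converges in every $C^k$-norm to the KKPS field $A_0(\cos^2 s)\,\xi$, and is not of the form \eqref{kkps} because $\abs{u_\varepsilon}^2=A_\varepsilon^2$ is not a function of $\cos^2 s$. Richer instances of \eqref{rs}, carrying also a nonzero horizontal component, should in addition be non-localizable, so that Lemma~\ref{conservnorm} upgrades the conclusion to: not isometric to any member of \eqref{kkps}.

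The $C^k$-closeness and the Beltrami step are immediate, so the hard part lies elsewhere: it is the content of Section~2, namely setting up \eqref{rs} and pinning down exactly which profile/coefficient data make it solve \eqref{eulb}. As the Introduction's computation for $u=f_1K_1+f_2K_2$ already indicates, the compatibility conditions are delicate and a careless ansatz merely reproduces \eqref{kkps}, so the main obstacle is to locate the genuinely new solution branch inside \eqref{rs} and (for the non-localizable examples) to push through the norm computation that certifies novelty via Lemma~\ref{conservnorm}. One must also verify smoothness on all of $\Ss^3$: this is automatic for $u=A\xi$ (as $\xi$ is nowhere zero and $A$ is a pullback), while a horizontal component forces the profile functions to extend smoothly across $s=0$ and $s=\tfrac{\pi}{2}$, i.e. to be smooth functions of $\cos^2 s$.
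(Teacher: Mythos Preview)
There is a genuine gap. The paper's Introduction makes explicit that ``not a member'' is to be read \emph{up to isometry} (this is the very reason Lemma~\ref{conservnorm} is introduced), and your own setup invokes that lemma. But your Beltrami perturbation $u_\varepsilon = c\,\xi + \varepsilon w$, with $w$ in the $\mu=2$ eigenspace, is a left-invariant field; right multiplication by a suitable element of $SU(2)$ is an isometry of $\Ss^3$ whose adjoint action on $\mathfrak{su}(2)\cong\RR^3$ is the standard $SO(3)$ rotation, and it carries $u_\varepsilon$ to $\sqrt{c^2+\varepsilon^2}\,\xi$ --- a KKPS solution with constant $A$ and $B=0$. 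So $u_\varepsilon$ fails to be literally of the form \eqref{kkps} only because you have rotated the frame, and the Beltrami half of the theorem remains unproven. The paper avoids this trap by working in the $\mu=6$ eigenspace via the deformed Nomizu family of Proposition~\ref{solnomi}: there a direct computation gives $u_a(\abs{u_a}^2)\neq 0$ for $a\neq 0,1$, so Lemma~\ref{conservnorm} excludes any isometry to \eqref{kkps}.

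Your non-constant-$b$ argument has the same defect in milder form. The fields $u_\varepsilon = A_\varepsilon\,\xi$ are localizable (indeed $\nabla_{u_\varepsilon} u_\varepsilon=0$, as you observe), so Lemma~\ref{conservnorm} is silent, and the criterion ``$\abs{u_\varepsilon}^2$ is not a function of $\cos^2 s$'' is not isometry-invariant. You recognise this and gesture toward ``richer instances of \eqref{rs}'' that ``should'' be non-localizable, but you never produce one --- and that is exactly the hard step. The paper supplies it concretely through the family \eqref{kkps2} of Proposition~\ref{solab} with $a_1=1$, $a_2=a-1$: at $a=0$ this is KKPS, while for small $a>0$ one computes $u_a(\abs{u_a}^2)=a_1 a_2(a_1+a_2)\sin 2s\sin 4s\sin 2(\phi_1+\phi_2)\not\equiv 0$, closing the argument via Lemma~\ref{conservnorm}.
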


The proof is based on the explicit construction of new steady Euler fields that turn out to be deformations of specific KKPS solutions. We end with an Appendix including the main notations and basic facts about Sasakian and 3-sphere geometry as well as a list of steady Euler solutions isometrically related to the KKPS class. We mention that, at various points along the paper, some explicit computations are needed and they have been done using Mathematica \cite{math}; the corresponding files are available from the author, upon request.

\section{Sasakian KKPS ansatz}

In this section we show how the ansatz \eqref{kkps} extends to any Sasakian 3-manifold. Let us briefly recall some basic facts about Sasakian geometry; for more information we refer the reader to the monograph \cite{BGalbook}.

A Sasakian manifold, denoted as $(M, \xi, \eta, \phi, g)$, is a 3-manifold $M$ (or odd-dimensional manifold in general) endowed with the following structures: a contact form $\eta$ (i.e. a 1-form such that $\eta \wedge \dif \eta \neq 0$), a Reeb vector field (uniquely defined by the conditions $\eta(\xi)=1$ and $\dif \eta (\xi, \cdot)=0$), a $(1, 1)$-tensor field $\phi$ such that $\phi^2 = -I + \eta \otimes \xi$ and a Riemannian metric $g$ such that $\tfrac{1}{2}\dif \eta(\cdot, \cdot) = g(\cdot, \phi(\cdot))$,  $\eta(\cdot) = g(\cdot, \xi)$  and with respect to which $\xi$ is a Killing vector field.
We recall that in this context $\phi$ can be seen as a transverse (to $\xi$) complex structure, and that $\phi X = -\nabla_{X} \xi$ for any vector $X$. On each Sasakian 3-manifold one can (locally) choose an adapted orthonormal frame $\{\xi, X_1, X_2=-\phi X_1\}$, whose properties are listed in \cite{PSS} (see  Appendix). 

\medskip
\noindent Any compact Sasakian 3-manifold fits into one of the following diffeomorphism classes~\cite{gei}:
\begin{equation*} \label{3types}
\Ss^3/\Gamma, \qquad  \mathrm{Nil}^3/\Gamma, \qquad \widetilde{SL}(2, \RR)/\Gamma\,,
\end{equation*}
where $\Gamma$ is any discrete subgroup of the isometry group of the corresponding canonical metric. Moreover, Belgun's metric classification ~\cite{bel1, bel2} implies that any possible Sasakian structure is a deformation of type I or of type II of a standard Sasakian structure on each of these spaces~\cite{BGalbook}. These standard structures will be used for exemplification in the last part of this section.

Inspired by \cite{gav}, we propose the following \textit{ansatz} on a Sasakian 3-manifold $M$:
\begin{equation} \label{rs}
u=F(\psi) \xi + \phi (\gr \, G(\psi)), 
\end{equation}
\noindent where $F,G$ are arbitrary smooth real functions, and $\psi$ is a function on $M$. Recall that in \cite{gav}, Gavrilov constructed a compactly supported steady Euler field having the following form (up to a nonconstant factor): $\tilde u=F(\psi)\xi+ \xi \times \gr \psi$ (see \cite[(18)]{gav}), where $\xi$ was a Killing field along which the function $\psi$ is constant. Notice that our term $\phi (\gr \, \psi)$ is a reinterpretation of $\xi \times \gr \psi$.

\medskip
The following proposition tells us how to decide whether a solenoidal vector field $u$ on $M$ is of the form \eqref{rs} with $\psi$ a constant function along the flow of the Reeb field (i.e. $\xi(\psi)=0$). This is particularly useful when there exists a globally adapted orthonormal frame (as it is the case in the examples considered in this paper). We shall also see that, if $H^1(M)=0$, the ansatz \eqref{rs} covers all solenoidal vector fields that satisfy $u(\eta(u))=0$ and commute with $\xi$, i.e. $[\xi, u] = 0$. When $[\xi, u] = 0$, one uses the terminology: "$u$ is $\xi$-invariant" or "$\xi$ is a symmetry of $u$".

We start with an instrumental result.

\begin{lm}\label{instrument}
Let $(M, \xi, \eta, \phi, g)$ be a Sasakian $3$-manifold and $u = f \xi + f_1 X_1 +f_2 X_2$ a vector field on $M$.

$(i)$ $[\xi, u] = 0$ is equivalent to the following set of conditions:
\begin{equation}\label{S1invar}
\xi(f)=0, \quad \xi(f_1)=-(C_0+1)f_2, \quad \xi(f_2)=(C_0+1)f_1
\end{equation}

$(ii)$ $\phi u$ is a closed vector field (i.e. its dual 1-form is closed) if and only if the following conditions are satisfied:
\begin{equation*}\label{closed}
\di u - \xi(f)=0, \quad \xi(f_1)=-(C_0+1)f_2, \quad \xi(f_2)=(C_0+1)f_1.
\end{equation*}
\end{lm}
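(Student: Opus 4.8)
The plan is to prove both items by a direct computation in the adapted orthonormal frame $\{\xi, X_1, X_2 = -\phi X_1\}$, using the structure equations recalled in the Appendix. Concretely, I will use the bracket relations $[\xi, X_1] = -(C_0+1)X_2$, $[\xi, X_2] = (C_0+1)X_1$ and $[X_1, X_2] = a\,X_1 + b\,X_2 - 2\xi$ for suitable functions $a, b$ (so that $C_0$ is the coefficient in $\nabla_\xi X_1 = -C_0 X_2$), together with $\phi\xi = 0$, $\phi X_1 = -X_2$, $\phi X_2 = X_1$, and the divergences $\di\xi = 0$, $\di X_1 = -b$, $\di X_2 = a$ (the latter two being immediate from the bracket relations, e.g. $\di X_1 = g([X_2,X_1],X_2)$). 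Throughout I write $\eta, \theta_1 := X_1^\flat, \theta_2 := X_2^\flat$ for the dual coframe.

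For $(i)$, I would expand
\[
[\xi, u] = \xi(f)\,\xi + \xi(f_1)\,X_1 + \xi(f_2)\,X_2 + f_1[\xi, X_1] + f_2[\xi, X_2],
\]
substitute the bracket relations, and collect the coefficients along $\xi, X_1, X_2$; vanishing of these three coefficients is exactly \eqref{S1invar}, and the converse is the same identity read backwards.

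For $(ii)$, one first observes $\phi u = f_2 X_1 - f_1 X_2$, hence $(\phi u)^\flat = f_2\,\theta_1 - f_1\,\theta_2$. Dualising the bracket relations (via $\dif\alpha(Y,Z) = -\alpha([Y,Z])$ on the frame) gives $\dif\eta = 2\,\theta_1\wedge\theta_2$, $\dif\theta_1 = -(C_0+1)\,\eta\wedge\theta_2 - a\,\theta_1\wedge\theta_2$ and $\dif\theta_2 = (C_0+1)\,\eta\wedge\theta_1 - b\,\theta_1\wedge\theta_2$, and a short computation yields
\[
\dif(\phi u)^\flat = \big(\xi(f_2)-(C_0+1)f_1\big)\,\eta\wedge\theta_1 - \big(\xi(f_1)+(C_0+1)f_2\big)\,\eta\wedge\theta_2 - \big(X_1(f_1)+X_2(f_2)+a f_2 - b f_1\big)\,\theta_1\wedge\theta_2 .
\]
The $\eta\wedge\theta_i$ components force the last two equations of the statement; and the one thing that must be noticed rather than merely computed is that the bracketed coefficient of $\theta_1\wedge\theta_2$ equals $\di u - \xi(f)$ — indeed $\di u = \xi(f) + X_1(f_1) + X_2(f_2) + f_2\di X_2 + f_1\di X_1$ and $\di X_2 = a$, $\di X_1 = -b$. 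Setting $\dif(\phi u)^\flat = 0$ then gives precisely the three displayed conditions.

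A slightly more conceptual route for $(ii)$ is available: since $\tfrac12\dif\eta(\cdot,\cdot) = g(\cdot,\phi\cdot)$ and $\phi$ is $g$-skew, $(\phi u)^\flat = -\tfrac12\,\iota_u\dif\eta$, so by Cartan's formula $\dif(\phi u)^\flat = -\tfrac12\,\mathcal{L}_u\dif\eta$; thus ``$\phi u$ closed'' means ``$u$ preserves the $2$-form $\dif\eta$'', and expanding $\mathcal{L}_u\dif\eta$ in the frame recovers the same three equations. I do not expect a genuine obstacle here — the computation is elementary — the only care needed is in fixing the sign conventions for $\phi$ and $\dif\eta$ and in tracking the off-diagonal coefficients $a, b$, whose precise values turn out to be irrelevant to the final statement once the divergence combination above is recognized.
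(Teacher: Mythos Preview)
Your proof is correct. Part $(i)$ matches the paper exactly (both expand the bracket using the structure relations \eqref{struct1}). For part $(ii)$ the paper takes a slightly different computational route: it uses the characterization ``$(\phi u)^\flat$ closed $\Leftrightarrow g(\nabla_Y \phi u, Z) = g(\nabla_Z \phi u, Y)$ for all $Y,Z$'' and checks this on the three basis pairs $(\xi,X_1)$, $(\xi,X_2)$, $(X_1,X_2)$ via the connection coefficients \eqref{conne}, arriving at the same three conditions. Your approach --- computing $\dif(\phi u)^\flat$ directly in the dual coframe from the bracket relations --- is equally valid and arguably cleaner, since it uses only the Lie-algebraic data \eqref{struct1} and bypasses the full connection table; the identification of the $\theta_1\wedge\theta_2$ coefficient with $\di u - \xi(f)$ via $\di X_1 = -C_2$, $\di X_2 = C_1$ is exactly what the paper obtains from \eqref{div123}. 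Your additional remark that $(\phi u)^\flat = -\tfrac12\iota_u\dif\eta$, so closedness is equivalent to $\mathcal{L}_u\dif\eta = 0$, is a nice conceptual gloss absent from the paper.
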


\begin{proof} 
$(i)$ is a straightforward application of \eqref{struct1}.

\noindent $(ii)$ $\phi u$ is a closed vector field if, by definition, $g(\nabla_Y \phi u, Z) =  g(\nabla_Z \phi u, Y)$, for any vector fields $Y$ and $Z$. After simple computations using \eqref{conne} and \eqref{div123}, we see that this condition is satisfied for $Y=\xi$, $Z=X_1$ iff $\xi(f_2)=(C_0+1)f_1$,  for $Y=\xi$, $Z=X_2$ iff $\xi(f_1)=-(C_0+1)f_1$ and for $Y=X_1$, $Z=X_2$ iff $\di u - \xi(f)=0$. By bilinearity the conclusion follows.
\end{proof}

\begin{pr} 
Let $(M, \xi, \eta, \phi, g)$ be a Sasakian $3$-manifold and $u$ a vector field on $M$. Let $f$, $f_1$, and $f_2$ be the components of $u$ with respect to a (local) adapted orthonormal frame $\{\xi, X_1, X_2=-\phi X_1\}$. If $u$ is of the form \eqref{rs}, for some function $\psi$ on $M$ with the property $\xi(\psi)=0$, then $\xi$ is a symmetry of $u$, i.e. $[\xi, u] = 0$.

If in addition $H^1(M)=0$, the following converse holds: given a divergence-free vector field $u$ satisfying $u(f)=0$, if $\xi$ is a symmetry of $u$, then $u$ is of the form \eqref{rs}, for some function $\psi$ such that $\xi(\psi)=0$.
\end{pr}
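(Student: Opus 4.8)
The plan is to use Lemma~\ref{instrument} as a dictionary between the closed form \eqref{rs} and the symmetry $[\xi,u]=0$, the only genuinely new input being a de Rham cohomology argument in the converse. \emph{For the forward implication}, suppose $u=F(\psi)\,\xi+\phi(\gr\,G(\psi))$ with $\xi(\psi)=0$. Since $\phi$ is $g$-skew-symmetric and $\phi\xi=0$, the vector $\phi(\gr\,G(\psi))$ is $g$-orthogonal to $\xi$, so $f:=\eta(u)=F(\psi)$ and $u-f\xi=\phi(\gr\,G(\psi))$. Applying $\phi$ and using $\phi^2=-I+\eta\otimes\xi$ together with $\eta(\gr\,G(\psi))=\xi(G(\psi))=G'(\psi)\,\xi(\psi)=0$ gives $\phi(u-f\xi)=-\gr\,G(\psi)$, which is exact, hence a closed vector field. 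Lemma~\ref{instrument}$(ii)$, applied to $u-f\xi$ (whose $\xi$-component vanishes), then yields $\di(u-f\xi)=0$ and the two identities $\xi(f_1)=-(C_0+1)f_2$, $\xi(f_2)=(C_0+1)f_1$; together with $\xi(f)=F'(\psi)\,\xi(\psi)=0$ these are exactly the conditions of Lemma~\ref{instrument}$(i)$, so $[\xi,u]=0$. (One also reads off $\di u=\di(f\xi)+\di(u-f\xi)=\xi(f)+f\,\di\xi+0=0$, i.e. \eqref{rs} with $\xi(\psi)=0$ is automatically solenoidal.)

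\emph{For the converse}, assume $H^1(M)=0$, $\di u=0$, $u(f)=0$ and $[\xi,u]=0$, where $f=\eta(u)$. By Lemma~\ref{instrument}$(i)$ we have $\xi(f)=0$ and the two twist identities above; combined with $\di u=0$ these are precisely the hypotheses of Lemma~\ref{instrument}$(ii)$, so $\phi u$ is a closed vector field. Since $H^1(M)=0$ it is exact: $\phi u=-\gr\,\psi$ for some $\psi\in C^\infty(M)$. Applying $\phi$ and using $\phi^2=-I+\eta\otimes\xi$ gives $u=\eta(u)\,\xi+\phi(\gr\,\psi)=f\xi+\phi(\gr\,\psi)$, and taking $\eta$ of $\phi u=-\gr\,\psi$ with $\phi\xi=0$ yields $\xi(\psi)=g(\xi,\gr\,\psi)=-g(\xi,\phi u)=g(\phi\xi,u)=0$.

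It remains to realise $f$ as a function of $\psi$. Since $\xi(f)=0$, both $\gr\,f$ and $\gr\,\psi=-\phi u$ lie in the contact distribution $\DD=\Ker\eta$, on which $\phi$ acts as an orthogonal complex structure. Expanding the hypothesis, $0=u(f)=\big(f\xi+\phi(\gr\,\psi)\big)(f)=-g\big(\gr\,\psi,\phi(\gr\,f)\big)$, and since $w$ and $\phi w$ are orthogonal in each fibre of $\DD$, it follows that $\gr\,f$ and $\gr\,\psi$ are pointwise parallel, i.e. $\dif f\wedge\dif\psi\equiv0$ on $M$. Hence $f$ is functionally dependent on $\psi$; taking $G=\mathrm{id}$ (and noting that on a connected component of $M$ where $\psi$ is constant one simply has $u=f\xi$, still of the form \eqref{rs}) there is a smooth $F$ with $f=F(\psi)$, so $u=F(\psi)\,\xi+\phi(\gr\,G(\psi))$ with $\xi(\psi)=0$, as required.

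The step I expect to be the main obstacle is this last one: extracting a globally defined smooth $F$ with $f=F(\psi)$ from $\dif f\wedge\dif\psi\equiv0$. This is immediate on the open dense set where $\dif\psi\neq0$ and the level sets of $\psi$ are connected, but in general it requires either a mild genericity assumption on $\psi$ or a componentwise argument; everything else is bookkeeping with the standard Sasakian identities (and with the structure constant $C_0$ and the sign conventions when invoking Lemma~\ref{instrument}). The hypothesis $H^1(M)=0$ is used exactly once, to pass from closedness to exactness of $\phi u$.
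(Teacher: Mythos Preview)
Your proof is correct and follows essentially the same route as the paper's: both directions pass through Lemma~\ref{instrument} to translate between closedness of $\phi u$ and $\xi$-invariance, and the converse uses $H^1(M)=0$ to produce a potential whose gradient is then shown pointwise parallel to $\gr f$. Your explicit flagging of the functional-dependence step ($\dif f\wedge\dif\psi\equiv0\Rightarrow f=F(\psi)$ globally) as the delicate point is in fact more careful than the paper's own treatment, which simply asserts the conclusion from collinearity of gradients without further comment.
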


\begin{proof}
Let $u$ be a vector field is of the form \eqref{rs} for some function $\psi$ such that $\xi(\psi)=0$. Then $u$ is divergence-free ($\di u =0$ is implied by $\xi(\psi)=0$ as we will show in the next proposition). Applying $\phi$ to both members of \eqref{rs} and using the fact that $\gr \, G(\psi)$ is orthogonal to $\xi$, we obtain that $\phi u$ is a gradient vector field, so in particular is closed. Using first $(ii)$, then $(i)$ of Lemma \ref{instrument}, we obtain that $\xi$ is a symmetry of $u$.

Assume now $H^1(M)=0$, and let $u$ be a divergence-free vector field $u$ satisfying $[\xi, u]=0$. By Lemma  \ref{instrument}, $\phi u$ must be a closed vector field. As $H^1(M)=0$, there exists a function $G$ such that $\phi u = -\gr G$, and in particular $\xi(G)=0$. We deduce that $u$ is of the form $u=f\xi + \phi \gr G$, where $f=\eta(u)$. Since by hypothesis we also have $u(f)=0$, and $\xi(f)=0$ cf.  \eqref{S1invar} and from $\phi u = -\gr G$ we get $X_1(G)=-f_2$ and $X_2(G)=f_1$, we can deduce that $X_1(f)X_2(G)-X_2(f)X_1(G)=0$. As $\xi(f)=0$ and $\xi(G)=0$, we also have $\xi(f)X_1(G)-X_1(f)\xi(G)=0$ and $\xi(f)X_2(G)-X_2(f)\xi(G)=0$. The latter 3 equations shows that the gradients of $f$ and $G$ must be collinear, so $u$ is of the form \eqref{rs} as stated.
\end{proof}

\noindent The following result displays a collection of sufficient conditions that a function $\psi$ should satisfy so that $u$ defined by \eqref{rs} yields a steady Euler solution on $M$.

\begin{pr}\label{myansatzprop}
 Let $(M, \xi, \eta, \phi, g)$ be a Sasakian 3-manifold and $u$ a vector field on $M$ be given by \eqref{rs}, with $\psi$ not identically constant. 

\medskip
\noindent $(i)$ The vector field $u$ is divergence-free if and only if either $F'=2G'$ or $\psi$ is a first integral of the Reeb field $\xi$ (i.e. $\xi(\psi)=0$).

\medskip
\noindent $(ii)$  If $G'$ is nowhere vanishing and the following conditions are satisfied 
\begin{equation}\label{beltcond}
\xi(\psi)=0; \qquad \Delta G(\psi) = F(\psi) \left(\frac{F'(\psi)}{G'(\psi)}-2\right),
\end{equation}
then $u$ is a Beltrami field with proportionality factor $F'(\psi)/G'(\psi)$. 

\noindent In particular, on $M=\Ss^3$ with the standard metric, the vector field $u= k G(\psi) \xi + \phi (\gr \, G(\psi))$ is a strong Beltrami field (curl eigenvector) for the eigenvalue $k\in \ZZ$ if $\xi(\psi)=0$ and $G(\psi)$ is an eigenfunction of the Laplacian (on $\Ss^3$) with eigenvalue $k(k-2)$.

\medskip
\noindent $(iii)$  If the functions $\psi$ and $G$ satisfy the conditions 
\begin{equation}\label{cond}
\xi(\psi)=0, \qquad \Delta G(\psi) = \mathcal{G}(\psi),
\end{equation}
for some (smooth) real function $\mathcal{G}$,  then $u$ is a steady solution of the Euler equations on $M$ with pressure $p=-\tfrac{1}{2}\abs{\gr G(\psi)}^2 - \int_0^\psi (\mathcal{G}(q)+2F(q))G'(q)dq$.
\end{pr}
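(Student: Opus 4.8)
The plan is to establish all three parts by a single direct computation: expand $\di u$, $\cu u$, and $u\times\cu u$ for $u=F(\psi)\xi+\phi\gr G(\psi)$ from the Sasakian structure relations recalled in the Appendix — principally $\nabla_X\xi=-\phi X$, $\di\xi=0$, and the fact that the Reeb field is a $2$-Beltrami field, $\cu\xi=2\xi$ (a consequence of the normalisation $\tfrac12\dif\eta=g(\cdot,\phi\cdot)$) — using the standard $3$-manifold identities $\cu(hX)=h\,\cu X+\gr h\times X$, $\di(A\times B)=g(\cu A,B)-g(A,\cu B)$, $\cu(A\times B)=\nabla_B A-\nabla_A B+A\,\di B-B\,\di A$, and $\nabla_u u=\tfrac12\gr\abs{u}^2-u\times\cu u$, together with the fact that on the contact distribution $\phi$ agrees with $\xi\times(\cdot)$ up to an orientation-dependent sign. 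The pivotal remark, used in $(ii)$ and $(iii)$, is that $\xi$ being Killing yields $[\xi,\gr h]=\gr(\xi(h))$ for every function $h$; so $\xi(\psi)=0$ forces $[\xi,\gr G(\psi)]=0$, whence $\nabla_\xi\gr G(\psi)=\nabla_{\gr G(\psi)}\xi=-\phi\gr G(\psi)$.

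Part $(i)$ is then immediate: writing $\di u=F'(\psi)\,\xi(\psi)+\di\bigl(\phi\gr G(\psi)\bigr)$, turning $\phi\gr G(\psi)$ into a cross product with $\xi$, and using $\di(A\times B)=g(\cu A,B)-g(A,\cu B)$ with $\cu(\gr G(\psi))=0$ and $\cu\xi=2\xi$, one gets $\di u=\bigl(F'(\psi)-2G'(\psi)\bigr)\xi(\psi)$, from which the stated dichotomy follows (non-constancy of $\psi$ being what makes the two alternatives exhaustive, not merely sufficient).

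For $(ii)$ and $(iii)$ one assumes $\xi(\psi)=0$, so $\gr G(\psi)=G'(\psi)\gr\psi$ is horizontal and $\abs{u}^2=F(\psi)^2+\abs{\gr G(\psi)}^2$. In $\cu(A\times B)=\nabla_B A-\nabla_A B+A\,\di B-B\,\di A$ with $A=\xi$, $B=\gr G(\psi)$, the terms $\nabla_{\gr G(\psi)}\xi$ and $\nabla_\xi\gr G(\psi)$ cancel by the pivotal remark, leaving $\cu(\phi\gr G(\psi))=(\Delta G(\psi))\,\xi$ purely vertical; adding $\cu(F(\psi)\xi)=2F(\psi)\,\xi+F'(\psi)\,\phi\gr\psi$ gives
\begin{equation*}
\cu u=\bigl(2F(\psi)+\Delta G(\psi)\bigr)\xi+F'(\psi)\,\phi\gr\psi .
\end{equation*}
For $(ii)$: on the dense open set $\{\gr\psi\neq0\}$, comparing with $\lambda u=\lambda F(\psi)\,\xi+\lambda G'(\psi)\,\phi\gr\psi$ forces $\lambda=F'(\psi)/G'(\psi)$ (using $G'\neq0$) from the $\phi\gr\psi$-component and then the second condition of \eqref{beltcond} from the $\xi$-component, so $u$ is a Beltrami field with factor $F'/G'$; when $F=kG$, \eqref{beltcond} becomes $\Delta G(\psi)=k(k-2)G(\psi)$ and the curl formula yields $\cu u=ku$ outright (no division, so $G'$ need not be assumed nonvanishing), which is the announced strong Beltrami field on $\Ss^3$. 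For $(iii)$: substituting both formulas into $u\times\cu u$, the $\xi\times\xi$ and $(\phi\gr\psi)\times(\phi\gr\psi)$ terms vanish and the mixed ones reduce to
\begin{equation*}
u\times\cu u=\Bigl(F(\psi)F'(\psi)-\bigl(2F(\psi)+\Delta G(\psi)\bigr)G'(\psi)\Bigr)\gr\psi .
\end{equation*}
This field is closed exactly when its coefficient depends on $\psi$ alone, i.e.\ (since $F$ and $G$ already do) exactly when $\Delta G(\psi)$ is a function of $\psi$ — which is the hypothesis \eqref{cond}. Under \eqref{cond}, $u\times\cu u=\gr b$ with $b=\int_0^\psi\bigl(FF'-(2F+\mathcal{G})G'\bigr)\dif q=\tfrac12F(\psi)^2-\int_0^\psi(2F+\mathcal{G})G'\,\dif q$ modulo a constant; together with $\di u=0$ from $(i)$ and $\nabla_u u=\tfrac12\gr\abs{u}^2-u\times\cu u$ this makes $u$ a steady Euler field with pressure $p=b-\tfrac12\abs{u}^2=b-\tfrac12F(\psi)^2-\tfrac12\abs{\gr G(\psi)}^2$, which after discarding the additive constant is exactly the stated expression.

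The only real difficulty I anticipate is keeping the signs coherent: the sign in $\phi V=\pm\,\xi\times V$ on the contact distribution, the sign of $\cu\xi$, the orientation implicit in $\cu=(\ast\dif(\cdot)^\flat)^\sharp$, and the convention for $\Delta$ (the geometer's $\Delta=-\di\gr$, so that the ``eigenvalue $k(k-2)$'' of $(ii)$ is a bona fide nonnegative eigenvalue of the Laplacian on $\Ss^3$) — all fixed by the Appendix, but to be tracked consistently so the three displayed formulas emerge with the signs claimed. Conceptually, though, the proposition rests on one observation worth isolating: by the Bernoulli reformulation \eqref{eulb}, the obstruction to $u$ being a steady Euler field is simply the closedness of $u\times\cu u$, and the computation pins that obstruction down as the functional dependence of $\Delta G(\psi)$ on $\psi$ — which is why \eqref{cond} is the natural hypothesis and precisely what the rest of the paper exploits.
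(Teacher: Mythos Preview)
Your proof is correct and reaches the same three displayed formulas as the paper --- $\di u=(F'-2G')\xi(\psi)$, $\cu u=(2F+\Delta G)\xi+F'\phi\gr\psi$, and (for part $(iii)$) an identification of the obstruction as $\Delta G(\psi)$ being a function of $\psi$ --- but by a genuinely different route. The paper works throughout in a local adapted orthonormal frame $\{\xi,X_1,X_2\}$, writing $u=F\xi+X_2(G)X_1-X_1(G)X_2$ and computing $\di u$, $\cu u$, and (for $(iii)$) $\nabla_u u$ component by component from the structure equations \eqref{struct1}, \eqref{conne}, \eqref{curl123}, \eqref{div123}; the Bernoulli reformulation is mentioned only as an alternative, and there via $[u,\cu u]=0$ rather than by computing $u\times\cu u$. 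You instead argue invariantly: you recognise $\phi=-\xi\times(\cdot)$ on horizontal vectors and $\cu\xi=2\xi$ as the only structural inputs, apply the Riemannian $3$-manifold identities $\di(A\times B)=g(\cu A,B)-g(A,\cu B)$ and $\cu(A\times B)=A\,\di B-B\,\di A-[A,B]$ together with the Killing identity $[\xi,\gr h]=\gr(\xi h)$, and for $(iii)$ compute $u\times\cu u$ directly. Your approach is shorter and makes transparent \emph{why} \eqref{cond} is the right hypothesis (it is exactly what makes $u\times\cu u$ a gradient); the paper's frame computation is more mechanical but self-contained, not presupposing that the vector-calculus identities survive curvature (they do, but you use them without comment). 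Your flagged hazard --- keeping the signs in $\phi=\pm\xi\times$, $\cu\xi=\pm2\xi$, and $\Delta=-\di\gr$ consistent with the Appendix --- is real but you have navigated it correctly: with the paper's orientation one has $\phi V=-\xi\times V$, $\cu\xi=2\xi$, and $\Delta$ positive, and your final pressure matches the statement exactly.
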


\begin{proof} 

\noindent $(i)$ Consider a (local) adapted orthonormal frame $\{\xi, X_1, X_2=-\phi X_1\}$, having the properties described in \cite[$\S 2$]{PSS}. With respect to this frame we have 
\begin{equation}\label{ansa}
u = F(\psi) \xi + X_2(G(\psi)) X_1 - X_1(G(\psi))X_2.
\end{equation}
Using \eqref{div123} we have $\di u = \xi(F)+X_1(X_2(G))-X_2(X_1(G))-C_1X_1(G)-C_2 X_2(G)=\xi(F)-2\xi(G)$, where we wrote simply $F$ and $G$ instead of $F(\psi)$ and $G(\psi)$, in order to alleviate the notations. The conclusion is immediate.

\medskip
\noindent $(ii)$ Using \eqref{curl123}, we easily obtain:
\begin{equation}\label{curlan}
\cu u= \big(\Delta G(\psi) +2F(\psi)\big)\xi + X_2(F(\psi))X_1 - X_1(F(\psi))X_2.
\end{equation}
Comparing with \eqref{ansa}, we see that the condition $\cu u = \theta u$ ($u$ is a Beltrami field) is satisfied provided that \eqref{beltcond} holds true.

\medskip
\noindent $(iii)$ In order to prove that $u$ is a steady Euler field, we compute $\nabla_u u$ and show that it is the gradient of some function which represents the pressure of the fluid (up to a constant). We shall use repeatedly \eqref{conne} and the hypothesis that $\psi$ is a first integral of $\xi$ (so $\xi(F(\psi))=\xi(G(\psi))=0$) which implies, via the structure equations, that $\xi(X_1(\psi))=-(C_0 + 1)X_2(\psi)$ and $\xi(X_2(\psi))=(C_0 + 1)X_1(\psi)$, and therefore also $\xi(\abs{\gr \psi}^2)=0$.  After a long but elementary computation, we obtain:
\begin{equation}\label{eulergrad}
\begin{split}
\nabla_u u &= \big(X_2(G(\psi))X_1(F(\psi))-X_1(G(\psi))X_2(F(\psi))\big)\xi +\\
&\big( 2F(\psi)X_1(G(\psi))+\tfrac{1}{2}G'(\psi)^2 X_1\left(X_1(\psi)^2 + X_2(\psi)^2\right)+ G'(\psi)^2 X_1(\psi)\Delta \psi\big)X_1 +\\
&\big( 2F(\psi)X_2(G(\psi))+\tfrac{1}{2}G'(\psi)^2 X_2\left(X_1(\psi)^2 + X_2(\psi)^2\right) + G'(\psi)^2 X_2(\psi)\Delta \psi\big)X_2\\
= & 2F(\psi)G'(\psi)\gr \psi +\tfrac{G'(\psi)^2}{2}\gr (\abs{\gr \psi}^2) +G'(\psi)^2 (\Delta \psi) \, \gr \psi \\
= & 2F(\psi)G'(\psi)\gr \psi +\tfrac{1}{2}\gr (\abs{\gr G(\psi)}^2) + \Delta G(\psi)  \, \gr G(\psi).
\end{split}
\end{equation}
where in the second equality we used again $\xi(\abs{\gr \psi}^2)=0$. Taking $\Delta G(\psi) = \mathcal{G}(\psi)$ into account, it becomes clear that the right hand side of \eqref{eulergrad} is a gradient vector.

\noindent An alternative proof, in the case when $H^1(M)=0$, can be obtained by checking that $[u, \cu u]=0$ which implies \eqref{eulb}; for this we use \eqref{curlan}.
\end{proof}

\begin{re}[Regular case]
A Sasakian structure is \emph{regular} if the Reeb field is the (unit) vertical field of a circle bundle over a surfuce $\Sigma$. In this case, the condition $\xi(\psi)=0$ says that $\psi$ is a \emph{basic} function (it is the composition of a function on the base with the bundle projection $\pi:M\to \Sigma$). In particular, on the $3$-sphere, item $(ii)$ in the above Proposition shows that one can construct strong Beltrami fields (i.e. eigenfields of $\cu$) on $\Ss^3$ purely in terms of eigenfunctions of the Laplacian on $\Ss^2(\frac{1}{2})$, using the Hopf fibration seen as Riemannian submersion $\Ss^3\to\Ss^2(\frac{1}{2})$, cf. also \cite{kom}.
\end{re}

The first examples of functions satisfying \eqref{cond} are provided by the isoparametric functions that are constant along the Reeb flow. Recall that a smooth function $f: M\to \RR$ is called \textit{isoparametric} \cite{car, wa} if
\begin{equation}\label{isodef}
\abs{\gr \psi}^2 =\mathcal{F}_1(\psi), \qquad \Delta \psi = \mathcal{F}_2(\psi).
\end{equation}
On a space form $M$, these conditions characterize a function whose regular level sets form a parallel family of hypersurfaces with constant mean curvature; they also have  constant principal curvatures, i.e. they are  \emph{isoparametric hypersurfaces} (see \cite[Ch.2]{baird}, and for a broader historical survey, \cite{thor}). 

\begin{co}\label{coriso}
If $\psi$ is a isoparametric function with the property $\xi(\psi)=0$,
then $u$ given by \eqref{rs} is a \emph{localizable} steady Euler field on $M$, for any (smooth) functions $F$, $G$.
\end{co}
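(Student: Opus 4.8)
The plan is to reduce everything to Proposition~\ref{myansatzprop}$(iii)$ together with two elementary observations: that an isoparametric $\psi$ forces both $\Delta G(\psi)$ and $\abs{u}^2$ to be functions of $\psi$ alone, and that $u(\psi)=0$ for every vector field of the form \eqref{rs} with $\xi(\psi)=0$.

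First I would check that $u$ is a steady Euler field. By the chain rule and the isoparametric identities \eqref{isodef},
\[
\Delta G(\psi) = G''(\psi)\abs{\gr\psi}^2 + G'(\psi)\Delta\psi = G''(\psi)\mathcal{F}_1(\psi) + G'(\psi)\mathcal{F}_2(\psi),
\]
which is a smooth function of $\psi$; calling it $\mathcal{G}(\psi)$, the pair \eqref{cond} holds (the condition $\xi(\psi)=0$ being part of the hypothesis), so Proposition~\ref{myansatzprop}$(iii)$ applies and $u$ is a steady Euler solution on $M$, with the pressure exhibited there.

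Next I would compute the velocity norm. Since $\xi(\psi)=0$ we have $g(\xi,\gr\psi)=0$, so $\gr G(\psi)=G'(\psi)\gr\psi$ is orthogonal to $\xi$; the relation $\phi^2=-I+\eta\otimes\xi$ then gives $\abs{\phi\,\gr G(\psi)}^2=\abs{\gr G(\psi)}^2$, while $\phi\,\gr G(\psi)$ is itself orthogonal to $\xi$ (because $g(\xi,\phi Y)=\tfrac12\dif\eta(\xi,Y)=0$), so the mixed term disappears. Using the isoparametric identity once more,
\[
\abs{u}^2 = F(\psi)^2 + G'(\psi)^2\abs{\gr\psi}^2 = F(\psi)^2 + G'(\psi)^2\mathcal{F}_1(\psi) =: \Phi(\psi),
\]
a smooth function of $\psi$ alone. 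Consequently $u(\abs{u}^2)=\Phi'(\psi)\,u(\psi)$, and it remains to note that $u(\psi)=F(\psi)\xi(\psi)+g\big(\phi\,\gr G(\psi),\gr\psi\big)=G'(\psi)\,g(\phi\,\gr\psi,\gr\psi)=0$, since $\phi$ is skew-adjoint for $g$ (as $\tfrac12\dif\eta$ is antisymmetric) and hence $g(\phi X,X)=0$ for all $X$. Therefore $u(\abs{u}^2)=0$ everywhere, i.e. $u$ is localizable.

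I do not expect any real obstacle here: the content is entirely in recognizing that isoparametricity of $\psi$ makes both $\Delta G(\psi)$ and $\abs{u}^2$ depend only on $\psi$, after which steadiness is Proposition~\ref{myansatzprop}$(iii)$ and localizability is immediate from the identity $u(\psi)=0$, valid for \emph{every} ansatz \eqref{rs} with $\xi(\psi)=0$. The only point needing a line of care is the orthogonality bookkeeping ($\gr G(\psi)\perp\xi$, $\phi\,\gr G(\psi)\perp\xi$, and $\abs{\phi Y}=\abs{Y}$ on $\xi^\perp$), all of which are standard identities of the Sasakian structure recalled in the Appendix.
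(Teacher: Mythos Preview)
Your argument is correct and mirrors the paper's proof: both invoke Proposition~\ref{myansatzprop}$(iii)$ after observing that isoparametricity makes $\Delta G(\psi)$ a function of $\psi$, and both prove localizability by showing $\abs{u}^2$ depends only on $\psi$ while $u(\psi)=0$ (the paper phrases the latter as ``$\gr\abs{u}^2$ is collinear with $\gr\psi$ and therefore orthogonal to $u$''). The only cosmetic discrepancy is the sign in the chain rule for $\Delta G(\psi)$, reflecting the geometer's versus analyst's Laplacian convention; the paper uses the former, but this is immaterial to the conclusion.
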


\begin{proof}
Let $\psi$ be a isoparametric function with the property $\xi(\psi)=0$. The generally valid identity $\Delta G(\psi)=G'(\psi)\Delta \psi - G''(\psi) \abs{\gr \psi}^2$ yields in our case: $\Delta G(\psi)=G'(\psi)\mathcal{F}_2(\psi) - G''(\psi) \mathcal{F}_1(\psi)$, so \eqref{cond} is satisfied and, according to Proposition \ref{myansatzprop}, $u$ is a steady Euler field.

\noindent Observe that $\gr \abs{u}^2= \gr \left(F(\psi)^2 + \abs{\gr \, G(\psi)}^2)\right) =$ $\gr \left(F(\psi)^2 + G'(\psi)^2  \mathcal{F}_1(\psi)\right)$ is collinear with $\gr \psi$ and therefore orthogonal to $u=F(\psi) \xi + G'(\psi) \phi (\gr \, \psi)$ (recall that $g(\xi, \gr \psi)=0$ and $g(X, \phi X)=0$, for any $X$). So $u(\abs{u}^2)=0$ and $u$ is localizable.
\end{proof}

In the remaining part of this section we will illustrate the above results on the three regular Sasakian models, with spherical, Nil, and $\widetilde{SL}_2$-type geometries. It would be interesting to know whether there exist steady Euler fields of the form \eqref{rs} on any \textit{compact} Sasakian 3-manifold, that is on compact quotients of the three standard spaces, endowed with a metric given as a deformation of type I or II of the standard one.

\begin{ex}[Spherical geometry] \label{exkkpsas}
The KKPS ansatz \eqref{kkps} is a particular instance of the new ansatz \eqref{rs}. To see this, consider the isoparametric function $\psi(\cos s \, e^{i\phi_1}, \sin s \, e^{i\phi_2}) = \cos^2 s$ on $\Ss^3$ and \eqref{rs} becomes: $u=[F(\cos^2 s)+\cos 2s G'(\cos^2 s)]\xi-G'(\cos^2 s)\xi^\prime$, that clearly has the form \eqref{kkps}. In particular, the nonvanishing KKPS-type Beltrami fields studied in \cite{pssnew} are of the form \eqref{rs} and satisfy \eqref{beltcond}.
\end{ex}

It is important to notice that the above example is essentially the unique example for Corollary \ref{coriso} on $\Ss^3$. Indeed, the isoparametric functions on a (round) sphere $\Ss^m$ are given by the so called Cartan-M\"unzer polynomials on $\RR^{m+1}$ that can have degree $p=1, 2, 3, 4$ or $6$ \cite{mun, mun2}, degree that coincides with the number of distinct principal curvatures of the regular level sets (which are surfaces in our case $m=3$). As the case $p=1$ is excluded (it cannot be  $\xi$-invariant), the only remaining case is $p=2$ that corresponds to Example \ref{exkkpsas} above. At a first sight this might be surprising, since the following degree 4 polynomial introduced by Nomizu in \cite{nomi} defines an isoparametric function on $\Ss^3\subset \RR^2 \times \RR^2$:
\begin{equation} \label{nomizudef}
\psi(x_1, y_1, x_2, y_2)=\left(\abs{(x_1, x_2)}^2 - \abs{(y_1, y_2)}^2\right)^2 +4\left\langle(x_1, x_2), (y_1, y_2)\right\rangle^2.
\end{equation}


Equivalently, in Hopf coordinates, the above function is given by $\psi (\cos s \, e^{i\phi_1}, \sin s \, e^{i\phi_2})= \tfrac{1}{4} (3 + \cos(4 s) + 2 \sin^2(2 s) \cos 2 (\phi_1 - \phi_2) )$. Moreover, $\psi$ is constant along $\xi$, so, by Corollary \ref{coriso}, the corresponding vector field $u$ defined by \eqref{rs} is a steady Euler field on $\Ss^3$.

But actually this is not a new solution, as the global isometry defined by $\Psi(x_1, y_1, x_2, y_2) = \left(\frac{1}{\sqrt{2}}(x_1 + y_2), \frac{1}{\sqrt{2}}(y_1 - x_2), \frac{1}{\sqrt{2}}(y_1 + x_2), \frac{1}{\sqrt{2}}(-x_1 + y_2)\right)$ pulls the function $\psi$ in \eqref{nomizudef} back to the function $4(x_1^2+y_1^2)(x_2^2+y_2^2)$, that is $\sin^2 2s$, which is covered by Example \ref{exkkpsas}.

Nonetheless, we shall see in the next section that Nomizu's function is useful for deriving new solutions.

\begin{ex}[Nil geometry]
The 3-dimensional Heisenberg group
$\mathrm{Nil}^3$ is the Lie group of all real matrices
 $\begin{pmatrix} 1 & x & z \\ 0 & 1 & y \\ 0 & 0 & 1 \end{pmatrix}$ on which the subgroup with integer entries $\Gamma=\ZZ^3$ acts by left multiplication. Then the (compact) $3$-dimensional \emph{Heisenberg nilmanifold} is defined as $M=\mathrm{Nil}^3/\Gamma$. The field $\xi=\frac{1}{2\pi}\partial_z$ is the Reeb field associated to the contact form $\eta = 2\pi(\dif z-x \dif y)$ that admits the adapted Sasakian metric $g=\pi(\dif x^2+ \dif y^2)+4\pi^2(\dif z-x \dif y)^2$.

The function $\psi:M \to \RR$, $\psi(x,y,z)= \cos 2 \pi (x - y)$ satisfies \eqref{isodef}, so any vector field of the form \eqref{rs} is a localizable steady Euler solution on $(\mathrm{Nil}^3/\Gamma, \, g)$.
\end{ex}

\begin{ex}[$\widetilde{SL}_2$ geometry] 
On the Lie group $SL(2, \RR)$ (the covering of the unit tangent bundle $T_1 \Sigma_g$ of a compact oriented surface $\Sigma_g$ of genus $g > 1$, and with constant $-2$ Gaussian curvature) we consider the  contact form  $\eta = 2\dif \theta + \frac{1}{y} \dif x$, with (regular) Reeb field $\xi=\frac{1}{2}\partial_\theta$ and the adapted Sasakian metric $g=\frac{1}{2y^2}(\dif x^2+ \dif y^2)+(2\dif \theta + \frac{1}{y} \dif x)^2$, where the coordinates $(x,y,\theta)\in \RR\times (0, \infty) \times [0, 2\pi]$ are given by the Iwasawa decomposition of matrices on $SL(2, \RR)$.

The function $\psi: SL(2, \RR) \to \RR$, $\psi(x,y,z)= x/y$ satisfies \eqref{isodef}, so any vector field of the form \eqref{rs} is a localizable steady Euler solution on $(SL(2, \RR), \, g)$. 

\noindent For other examples and further discussions  in this geometric context, see \cite[Ch.V $\S$4.D]{arn}.
\end{ex}

\section{Solutions on the 3-sphere and non-isolation}

In this section we find new solutions of the steady Euler equations on $\Ss^3$ endowed with the round metric. These solutions form two 1-parameter families of solutions that "bifurcate" from the KKPS class of solutions. This will allow us to prove the main result in Theorem \ref{maint} stating that in general a KKPS solution is not isolated. 

The first family of new solutions belongs to the new ansatz \eqref{rs} based on a deformation of Nomizu's function \eqref{nomizudef}.

\begin{pr} \label{solnomi}
Consider the following family of vector fields on $\Ss^3$, indexed by $a\geq 0$: 
\begin{equation} \label{solnomizu}
u_a = F(\psi_a) \xi + \phi (\gr \, \psi_a),
\end{equation}
where $F$ is a smooth real function, and the function $\psi_a$ is defined in terms of Hopf coordinates on $\Ss^3$ as $\psi_a = \tfrac{1}{4} (3 + \cos(4 s) + 2 a \sin^2(2 s) \cos 2 (\phi_1 - \phi_2))$. The following statements hold true.

\noindent $(i)$ For any $a$, the vector field $u_a$ is a steady Euler solution on $\Ss^3$ with Bernoulli function 
\begin{equation} \label{bernomizu}
b=\tfrac{1}{2}F(\psi_a)^2 -12\psi_a^2 + 16 \psi_a -2 \int_0^{\psi_a} F(q) dq.
	\end{equation}
In particular, if $F(x)=2(3x -2)$, then $u_a$ is a strong Beltrami field, that is, for any $a$, $u_a$ is an eigenvector of the $\cu$ operator for the eigenvalue $\mu=6$.

\noindent $(ii)$ If $a=0$, then $u_a$ is a KKPS steady Euler solution (i.e. of the form \eqref{kkps}). If $a=1$, then $u_a$ is a KKPS steady Euler solution up to isometries.

\noindent $(iii)$ If $a \neq 0,1$, then there exists no isometry relating $u_a$ to any member of the KKPS family \eqref{kkps}.
\end{pr}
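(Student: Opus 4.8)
The plan is to argue through the localizability property and Lemma~\ref{conservnorm}. Every KKPS solution is localizable (this is recalled in the Introduction; it is also immediate from the fact that $\abs{v}^2=A^2+B^2+2AB\cos 2s$ is a function of the common first integral $\cos^2 s$), and localizability is an isometry invariant by Lemma~\ref{conservnorm}. Hence it suffices to prove that $u_a$ is \emph{not} localizable for $a\neq 0,1$; that is, to compute $u_a(\abs{u_a}^2)$ and exhibit a point at which it does not vanish.

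The expression $u_a(\abs{u_a}^2)$ collapses almost entirely. First, $\psi_a$ is a first integral of $u_a$, since $u_a(\psi_a)=F(\psi_a)\,\xi(\psi_a)+g(\phi\,\gr\psi_a,\gr\psi_a)$ and both summands vanish ($\xi(\psi_a)=0$ because $\psi_a$ depends on $\phi_1,\phi_2$ only through $\phi_1-\phi_2$, and $g(X,\phi X)=0$ always). Second, $\xi$ is Killing with $\xi(\psi_a)=0$, so $\xi(\abs{\gr\psi_a}^2)=0$. Writing $\abs{u_a}^2=F(\psi_a)^2+\abs{\gr\psi_a}^2$ and combining these facts,
\[
u_a(\abs{u_a}^2)=u_a(\abs{\gr\psi_a}^2)=\bigl(\phi\,\gr\psi_a\bigr)(\abs{\gr\psi_a}^2)=g\bigl(\phi\,\gr\psi_a,\ \gr(\abs{\gr\psi_a}^2)\bigr).
\]
At a point where $\gr\psi_a\neq0$ the triple $\{\xi,\gr\psi_a,\phi\,\gr\psi_a\}$ is an orthogonal basis of the tangent space, and $\gr(\abs{\gr\psi_a}^2)$ is orthogonal to $\xi$; so the right-hand side vanishes there exactly when $\gr(\abs{\gr\psi_a}^2)$ is collinear with $\gr\psi_a$. (Requiring this everywhere is precisely the condition that $\psi_a$ be isoparametric in the sense \eqref{isodef} — which, since $\Delta\psi_a=24\psi_a-16$ is already a function of $\psi_a$, is the converse of Corollary~\ref{coriso}; but I only need one ``bad'' point, so no global input is necessary.)

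To locate a bad point, make everything explicit. In toroidal coordinates, setting $c=\cos 4s$ and $d=\cos 2(\phi_1-\phi_2)$ (independent, each ranging over $[-1,1]$), one computes
\[
\psi_a=\tfrac14\bigl(3+c+a(1-c)d\bigr),\qquad
\abs{\gr\psi_a}^2=(1-c)\bigl[(1+c)(ad-1)^2+2a^2(1-d^2)\bigr].
\]
For $0<a<1$, the points with $(c,d)=(-1,1)$ and $(c,d)=(2a-1,0)$ both lie on the level set $\{\psi_a=\tfrac{1+a}{2}\}$, yet there $\abs{\gr\psi_a}^2$ equals $0$ and $4a(1-a^2)>0$ respectively; for $a>1$, the points with $c=1$ and with $(c,d)=(0,1/a)$ both lie on $\{\psi_a=1\}$, with $\abs{\gr\psi_a}^2$ equal to $0$ and $2(a^2-1)>0$. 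Thus $\abs{\gr\psi_a}^2$ is non-constant along a level set of $\psi_a$; and since $\psi_a$ and $\abs{\gr\psi_a}^2$ are functions of $(c,d)$ with $\gr c\perp\gr d$, collinearity of $\gr(\abs{\gr\psi_a}^2)$ with $\gr\psi_a$ at the regular one of the two points amounts to the vanishing of a $2\times2$ Jacobian, which a short computation shows to be non-zero (up to sign, $a(1-a)^2(1+a)$, resp.\ a positive multiple of $a^2-1$). By the displayed identity, $u_a(\abs{u_a}^2)\neq0$ there, so $u_a$ is not localizable, and Lemma~\ref{conservnorm} forbids any isometry carrying $u_a$ onto a KKPS field. (Conceptually: $\psi_a$ has the three distinct critical values $1,\tfrac{1+a}{2},\tfrac{1-a}{2}$ when $a\neq0,1$, whereas the first integral $\cos^2 s$ of a KKPS solution — like any isoparametric function on $\Ss^3$ — has only two.)

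The only point calling for a little care is that the comparison points must genuinely be realized on $\Ss^3$: for $(c,d)=(2a-1,0)$ one needs $2a-1\in[-1,1]$, i.e.\ $a\le 1$, which is exactly why the cases $0<a<1$ and $a>1$ are handled separately (the value $\psi_a=1$ doing the job for the latter). Apart from that, the proof is a routine, if slightly lengthy, computation built on the grad-norm formula above.
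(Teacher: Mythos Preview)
Your proof is correct and follows the same overall strategy as the paper: show that $u_a$ fails to be localizable for $a\neq 0,1$ and invoke Lemma~\ref{conservnorm}. The execution, however, is different. The paper simply records the closed form
\[
u_a(\abs{u_a}^2)=16\,a(a^2-1)\sin 2s\,\sin 4s\,\sin 2(\phi_1-\phi_2),
\]
obtained by direct computation, from which non-localizability is immediate. You instead reduce $u_a(\abs{u_a}^2)$ to $(\phi\,\gr\psi_a)(\abs{\gr\psi_a}^2)$, express $\psi_a$ and $\abs{\gr\psi_a}^2$ as functions of $(c,d)=(\cos 4s,\cos 2(\phi_1-\phi_2))$, and then exhibit a point where the $2\times2$ Jacobian is nonzero. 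Your route is more conceptual---it makes the link with isoparametricity (and hence with Corollary~\ref{coriso}) explicit, and explains the factors $a$ and $a^2-1$ appearing in the paper's formula---but it is also longer and requires some care (the level-set comparison by itself is not enough, since one of your two points is critical; the Jacobian check at the regular point is what actually does the work, and one must note that $\gr c,\gr d$ are nonzero there). The paper's one-line direct computation is the cleaner execution of the same idea.
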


\begin{proof} $(i)$  If $a=0$, $\psi_a$ is a function of $\cos^2 s$ that is isoparametric, while if $a =1$, $\psi_a$ is Nomizu's function \eqref{nomizudef}, so it is again isoparametric. If $a \notin \{0, 1\}$, the function $\psi_a$ is not isoparametric, but  we still have $\xi(\psi_a)=0$ (so $\xi (\abs{\gr \psi_a}^2)=0$), and $\Delta \psi_a =  8 (3 \psi_a - 2)$. Since $u_a$ is of the form \eqref{rs} with $G(x)=x$, for any value of $a$ it follows by Proposition \ref{myansatzprop} and Corollary \ref{coriso} that $u$ is a steady Euler field. 

\medskip
\noindent $(ii)$ The field $u_0$ is of KKPS type cf. Example \ref{exkkpsas}. The fact that $u_1$ is isometrically related to a KKPS-type field follows from the discussion following Equation \eqref{nomizudef} and by noticing that the isometry $\Psi$ indicated there preserves the fields in the standard frame \eqref{stdframe}. 

\medskip
\noindent $(iii)$ We directly compute $u_a(\abs{u_a}^2)=16 a (a^2 - 1) \sin 2 s \sin 4 s \sin 2 (\phi_1 - \phi_2)$. Therefore, if $a \neq 0,1$, the field $u_a$ is not localizable. Since all KKPS solutions are necessarily  localizable, by Lemma \ref{conservnorm} the conclusion follows.
\end{proof}

The second family of new solutions is  of the form $u = f_1 K_1 + f_2 K_2$ briefly considered in the Introduction.
\begin{pr} \label{solab}
Consider the following family vector fields on $\Ss^3$, indexed by two parameters $a_1$, $a_2 \in \RR$, at least one of them being non-zero ($a_1^2+a_2^2\neq 0$):
\begin{equation} \label{kkps2}
u= \sin 2s \big( a_1 \sin(\phi_1 + \phi_2) X_1 +
a_2 \cos(\phi_1 + \phi_2) X_2 \big).
\end{equation}
The following statements hold true.

\noindent $(i)$ For any $a_1, a_2$, the vector field $u$ is a steady Euler solution on $\Ss^3$ with Bernoulli function 
\begin{equation} \label{bernkkps2}
b=\tfrac{1}{4}\big( a_1 a_2 \cos 4s + 
(a_1^2 + a_2^2 - (a_1^2 - a_2^2) \cos 2(\phi_1 + \phi_2)) \sin^2 2s\big).
\end{equation}

\medskip
\noindent $(ii)$ If $a_1=a_2$, then $u$ is a $\cu$ eigenvector (strong Beltrami field) for the  eigenvalue  $\mu = 4$. Conversely, if $u$ is a Beltrami field, then $a_1=a_2$.

\medskip
\noindent $(iii)$ If $a_1 = -a_2$, then $u$ is a KKPS steady Euler solution, while if $a_1 \neq -a_2$, and  $a_1 a_2  \neq 0$, then there is no isometry relating $u$ to any member of the KKPS family \eqref{kkps}.
\end{pr}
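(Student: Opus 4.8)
The plan is to realise each field \eqref{kkps2} as one of the shape $u=f_1K_1+f_2K_2$ discussed in the Introduction, for a well chosen pair of unit Killing fields, and then to extract (i)--(iii). I work in the model $\Ss^3=SU(2)\subset\CC^2$ with toroidal coordinates $(z_1,z_2)=(\cos s\,e^{i\phi_1},\sin s\,e^{i\phi_2})$, so that $2\,\mathrm{Re}(z_1z_2)=\sin 2s\cos(\phi_1+\phi_2)$ and $2\,\mathrm{Im}(z_1z_2)=\sin 2s\sin(\phi_1+\phi_2)$ are globally defined smooth functions. Let $K_1,K_2$ be the two unit left-invariant (hence Killing) fields completing the Hopf field $\xi$ to the standard orthonormal frame \eqref{stdframe}, i.e.\ $X_1=K_1$, $X_2=-\phi K_1=K_2$; then \eqref{kkps2} reads $u=f_1K_1+f_2K_2$ with $f_1=2a_1\mathrm{Im}(z_1z_2)$, $f_2=2a_2\mathrm{Re}(z_1z_2)$. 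A short computation in these coordinates establishes the four facts that drive everything: $\mathrm{Im}(z_1z_2)$ is a first integral of $K_1$ and $\mathrm{Re}(z_1z_2)$ of $K_2$; $K_1(\mathrm{Re}(z_1z_2))=K_2(\mathrm{Im}(z_1z_2))=|z_1|^2-|z_2|^2=\cos 2s$; $\mathrm{Re}(z_1z_2)K_1+\mathrm{Im}(z_1z_2)K_2=\tfrac12\sin 2s\,\partial_s$; and $\xi^\prime=\cos 2s\,\xi+\sin 2s\big(\sin(\phi_1+\phi_2)K_1-\cos(\phi_1+\phi_2)K_2\big)$.

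For part (i): since each $K_i$ is Killing and $K_i(f_i)=0$, one has $\di u=K_1(f_1)+K_2(f_2)=0$; and since $g(K_1,K_2)=0$ is constant, the formula for $\nabla_uu$ recorded in the Introduction collapses to $\nabla_uu=f_1K_1(f_2)K_2+f_2K_2(f_1)K_1$. Substituting $K_1(f_2)=2a_2\cos 2s$, $K_2(f_1)=2a_1\cos 2s$ and then the third fact above gives
\[
\nabla_uu=4a_1a_2\cos 2s\big(\mathrm{Re}(z_1z_2)K_1+\mathrm{Im}(z_1z_2)K_2\big)=2a_1a_2\sin 2s\cos 2s\,\partial_s=-\gr\!\Big(\tfrac{a_1a_2}{4}\cos 4s\Big),
\]
so $u$ is a steady Euler field with pressure $p=\tfrac{a_1a_2}{4}\cos 4s$. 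As $\abs{u}^2=f_1^2+f_2^2=\tfrac12\sin^2 2s\big(a_1^2+a_2^2-(a_1^2-a_2^2)\cos 2(\phi_1+\phi_2)\big)$, the Bernoulli function $b=p+\tfrac12\abs{u}^2$ is precisely \eqref{bernkkps2}.

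For part (ii): using $\cu K_i=2K_i$ (left-invariant Killing fields on the unit $\Ss^3$ have curl-eigenvalue $2$), the rule $\cu(hK)=h\,\cu K+\gr h\times K$, and the gradients of $\mathrm{Re}(z_1z_2),\mathrm{Im}(z_1z_2)$ in the frame $\{\xi,K_1,K_2\}$, one computes
\[
\cu u=2u+4a_2\,\mathrm{Im}(z_1z_2)K_1+4a_1\,\mathrm{Re}(z_1z_2)K_2+2(a_2-a_1)\cos 2s\,\xi .
\]
If $a_1=a_2$ the last term is zero and the middle two add up to $2u$, so $\cu u=4u$. Conversely, $\cu u=\mu u$ forces, on comparing $\xi$-components (recall $u$ has none), that $(a_2-a_1)\cos 2s\equiv 0$, hence $a_1=a_2$, and then $\mu=4$.

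For part (iii): if $a_1=-a_2=c$ then $u=c\,\sin 2s\big(\sin(\phi_1+\phi_2)K_1-\cos(\phi_1+\phi_2)K_2\big)$, which by the fourth fact equals $c(\xi^\prime-\cos 2s\,\xi)=-c(2\cos^2 s-1)\xi+c\,\xi^\prime$, a field of the form \eqref{kkps}. If instead $a_1\neq-a_2$ and $a_1a_2\neq0$, then expressing $u$ in the local frame $\{\partial_s,\xi,e_3\}$ and differentiating (or computing directly) gives
\[
u(\abs{u}^2)=2\,a_1a_2(a_1+a_2)\,\sin^2 2s\,\cos 2s\,\sin 2(\phi_1+\phi_2)\not\equiv 0 ,
\]
so $u$ is not localizable; since every KKPS solution \eqref{kkps} is localizable (as recalled in the Introduction), Lemma \ref{conservnorm} shows that no isometry of $\Ss^3$ carries $u$ to a KKPS field. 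The main obstacle is not conceptual but organisational: once $K_1,K_2$ and the invariant functions $\mathrm{Re}(z_1z_2),\mathrm{Im}(z_1z_2)$ are pinned down, everything reduces to the bookkeeping identities above and to the explicit expansions of $\nabla_uu$, $\cu u$ and $u(\abs{u}^2)$ — elementary but error-prone, and best cross-checked symbolically.
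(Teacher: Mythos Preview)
Your proof is correct. For parts (i) and (iii) it follows the same path as the paper --- compute $\nabla_u u$ to identify the pressure $p=\tfrac{a_1a_2}{4}\cos 4s$, then for (iii) use the expression of $\xi'$ in the standard frame and the localizability criterion via Lemma~\ref{conservnorm} --- though you package (i) more transparently by invoking the Introduction's identity for $\nabla_u u$ when $u=f_1K_1+f_2K_2$ with $K_i(f_i)=0$.

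For (ii) you take a different route from the paper. You compute $\cu u$ explicitly and read off both directions from its $\xi$-component; the paper instead deduces the forward direction from the constancy of $b$ (then identifies $\mu=4$ either via \cite{PSS} or by recognising the coefficients $f_i$ as degree-$2$ spherical harmonics), and handles the converse by computing $X_1(b)$ and $X_2(b)$. Your approach is more self-contained and settles both directions at once; the paper's ties the eigenvalue to the general spectral picture for $\cu$ on $\Ss^3$. One small remark: in the converse you write ``$\cu u=\mu u$'' as though $\mu$ were constant, but the statement allows a general Beltrami field ($\cu u=\theta u$ with $\theta$ a function). Your argument still goes through unchanged, since $u$ has no $\xi$-component and hence neither can $\theta u$.
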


\begin{proof} 
\noindent $(i)$ By direct computation we find $\nabla_u u = a_1 a_2 \sin 4s \, \partial_s$, so $u$ satisfies the first equation in \eqref{eul} with 
$p = \frac{1}{4}a_1 a_2 \cos 4s$. To see also that $\di u =0$, remind that $X_1, X_2$ are Killing vector fields and check that $X_1(\sin 2s \sin(\phi_1 + \phi_2))=0$ and $X_2(\sin 2s \cos(\phi_1 + \phi_2))=0$. The Bernoulli function $b$ is easily computed using the pressure $p$ identified above (up to an additive constant) and the norm of $u$.

\medskip
\noindent $(ii)$ If $a_1=a_2$, then the Bernoulli function $b$ is identically constant and \eqref{eulb} shows us that $u$ is a Beltrami field. It remains to verify that the proportionality factor is constant. One option is to check that $u$ verifies the equations \cite[(2.7)]{PSS}, showing that it is actually an eigenvector for the eigenvalue $\mu=4$ (tangent to the contact distribution $\ker \eta$). Otherwise, notice that the coefficients in the standard basis $f_i=\langle u, X_i\rangle$ are harmonic homogeneous polynomials of degree 2 on $\RR^4$ ($f_1=2a_1(y_1x_2 + x_1y_2)$, $f_2=2a_2(x_1x_2 - y_1y_2)$), so eigenfunctions of the Laplacian on $\Ss^3$ for the eigenvalue $\lambda=8$. It is known that the components in the standard basis of curl $\mu$-eigenvectors on the round $\Ss^3$ are Laplacian eigenfunctions with eigenvalue $\mu(\mu-2)$ (see e.g. \cite{PSS}).

For the converse, compute first $X_1(b)= -a_2 (a_1 - a_2)\sin 4s \cos(\phi_1 + \phi_2)$ and $X_2(b)= a_1 (a_1 - a_2)\sin 4s \sin(\phi_1 + \phi_2)$. If $u$ is a Beltrami field, then $b$ is a constant (this has to happen globally because we are in the analytic setting), and, in particular, $X_1(b)$ and $X_2(b)$ computed above must vanish everywhere, so $a_1 = a_2$. 

\medskip
\noindent $(iii)$ As $\xi'= \cos 2s \, \xi + \sin 2s \sin(\phi_1 + \phi_2) X_1 - \sin 2s \cos(\phi_1 + \phi_2) X_2$, if $a_1=-a_2$ we have $u=a_1 (-\cos 2s \, \xi + \xi')$ which is of the KKPS form \eqref{kkps}.

\noindent Let us check now the localizability property of $u$. By direct computation we obtain
$u(\abs{u}^2)= a_1 a_2 (a_1 + a_2) \sin 2 s  \sin 4s  \sin  2(\phi_1 + \phi_2)$ so if $a_1 \neq  -a_2$ and  $a_1 a_2  \neq 0$, the field is not localizable. But KKPS solutions are all localizable, so Lemma \ref{conservnorm} yields the conclusion. 
\end{proof}

\begin{re}[Mirror solutions]
The vector fields from the following  family also verify the steady Euler equations on $\Ss^3$:
$$
u= a_1\sin 2s \sin(\phi_1 - \phi_2) X_1^\prime + a_2 \sin 2s \cos(\phi_1 - \phi_2) X_2^\prime.
$$
They can be isometrically related to \eqref{kkps2} (so they do not represent new solutions) through an isometry that changes $\phi_2$ into $-\phi_2$, transforming the standard frame \eqref{stdframe} into the "mirror frame" \eqref{otherhopf}. We call the solutions obtained in this way, \emph{mirror solutions}. Notice that the KKPS  solutions are their own mirrors in the sense that the mirror of a solution \eqref{kkps} has the same form. See section \ref{twins} for more mirror solutions.
\end{re}

Now we are ready to prove our non-isolation result Theorem \ref{maint} (see Introduction).

\begin{proof}[Proof of Theorem \ref{maint}]
Let us consider the smooth 1-parameter family $u_a$ defined by \eqref{kkps2} with $a_1=1$ and $a_2=a-1$, for $a\geq 0$. We have $u_a - u_0 = a \sin 2s \cos(\phi_1 -\phi_2) X_2$ and it is immediate to see that there exists a positive constant $C$ such that:
\begin{equation} \label{estim}
 \norm{u_a - u_0}_{C^k}:=\sup \abs{u_a - u_0} + \sum_{i=1}^k \sup  \abs{\nabla^i (u_a - u_0)}  < C a,
\end{equation}
where $\abs{\cdot}$ is the norm of vector fields induced by the metric and $\nabla^i$ is the $i^{th}$ covariant derivative. This shows us that $u_a$ is as close as we wish to the KKPS solution $u_0$ with respect to the distance given by the $C^k$ norm ($k\in \NN$ arbitrarily fixed). At the same time, according to Proposition \ref{solab} $(iii)$, $u_a$, $a>0$ is not (isometric to) a KKPS solution, that completes the proof of the first statement.

Let us consider now the family $u_a$, $a\geq 0$ in \eqref{solnomizu} that bifurcates from the KKPS solution $u_0$. In this case $u_a - u_0 = [F(\psi_a)-F(\psi_0)] \xi + \frac{a}{2}\phi (\gr \, (\sin^2(2 s) \cos 2 (\phi_1 - \phi_2)))$. In particular, if $F$ is the linear function $F(x)=2(3x-2)$ for which $u_a$ is a strong Beltrami field (eigenvector of $\cu$ operator) cf. Proposition \ref{solnomi}$(i)$, then the estimate \eqref{estim} holds true. So $u_{a>0}$ may be arbitrarily $C^k$-close to the KKPS curl eigenvector $u_0$, but they are not isometrically related, cf. Proposition \ref{solnomi}$(iii)$, thus proving the second statement.

In the case when $F$ is non-linear, the conclusion remains true, but the estimate \eqref{estim} is less straightforward to obtain (it is enough to estimate the $C^k$-norm of $[F(\psi_a)-F(\psi_0)] \xi$ that reduces to show that the absolute value of the partial derivatives up to $k$ of $F(\psi_a)-F(\psi_0)$ are upper bounded by $Ca$; the latter follows from the mean value theorem).
\end{proof}

\begin{re}
It is worth to compare this result with \cite[Theorem 3.4]{kkpsnew} stating that, on $\Ss^3$, if there exists a steady Euler solution $C^k$-close to a \emph{stationary nondegenerate shear Euler flow} (which is a particular type of KKPS solution), then their Bernoulli functions are similar (in a precise sense), up to a small deformation. We mention that the KKPS solutions $u_{a=0}$ that we deform are not nondegenerate shear Euler flows, since the critical set of the Bernoulli function is larger that the Hopf link (see \cite{kkpsnew} for definitions).
\end{re}

Let us finally notice that our Theorem \ref{maint} establishes the non-isolation of specific KKPS solutions; it would be interesting to know a larger class of non-isolated KKPS solutions or to find out whether there exist isolated KKPS solutions.

\section{Appendix}
\subsection{Adapted orthonormal frames on Sasakian 3-manifolds} For the convenience of the reader we reproduce here some useful properties of adapted orthonormal frames $\{\xi, X_1, X_2=-\phi X_1\}$ on a Sasakian 3-manifold $M$, cf. \cite{PSS}. 

The commutation relations that define the \emph{structure functions} $C_i$, $i=0,1,2$ are:
\begin{equation} \label{struct1}
\begin{split}
& [\xi, X_1]=-(C_0+1)X_2, \ \  [X_1, X_2]=-2\xi+C_1 X_1 + C_2 X_2, 
\ \ [X_2, \xi]=-(C_0+1)X_1.
\end{split}
\end{equation}

The connection coefficients deduced from \eqref{struct1} read:
\begin{equation}\label{conne}
\left\{
\begin{array}{cccc}
\nabla_{\xi}\xi=0, &
\nabla_{X_1}\xi=X_2 , &
\nabla_{X_2}\xi=-X_1  \\[3mm]
\nabla_{\xi}X_1=-C_0 \, X_2 , &
\nabla_{X_1}X_1= -C_1 X_2 , &
\nabla_{X_2}X_1=\xi - C_2 \, X_2 \\[3mm]
\nabla_{\xi}X_2= C_0 \, X_1, &
\nabla_{X_1}X_2=-\xi + C_1 \, X_1, &
\nabla_{X_2}X_2 = C_2 \, X_1 \\[3mm]
\end{array}
\right.
\end{equation}

Let $X = f\xi + f_1 X_1 + f_2 X_2$ be an arbitrary vector field on $M$. Then
\begin{equation} \label{curl123}
\begin{split}
\cu X = & \big(X_1(f_2) - X_2(f_1)- C_1f_1 -C_2 f_2 +2f \big)\xi +\\
& \big(-\xi(f_2) + X_2(f) + (C_0+1) f_1 \big)X_1 + \\
& \big(\xi(f_1) - X_1(f) + (C_0+1) f_2 \big)X_2.
\end{split}
\end{equation}
and 
\begin{equation} \label{div123}
\di X = \xi(f)+ X_1(f_1) + X_2(f_2) - C_2 f_1 + C_1 f_2.
\end{equation}

\subsection{The 3-sphere geometry basics}

The sphere $\Ss^3$ is seen a the set of points $(z_1, z_2) \in \CC^2$ with $\abs{z_1}^2 + \abs{z_2}^2 =1$. Denoting $z_j= x_j + \ii y_j$, at each point $(x_1, y_1, x_2, y_2) \in \Ss^3$ we have the orthonormal (global) frame of Killing vector fields, which, at the same time, form a $L^2$-basis in the space of eigenvectors of $\cu$ operator for the lowest positive eigenvalue $\mu_1=2$:
\begin{equation} \label{stdframe}
\begin{split}
\xi     & = - y_1 \partial_{x_1} + x_1 \partial_{y_1} - y_2 \partial_{x_2} + x_2 \partial_{y_2} ,\\
X_1 & =-x_2 \partial_{x_1} + y_2 \partial_{y_1} + x_1 \partial_{x_2} - y_1 \partial_{y_2} ,\\
X_2 & =-y_2 \partial_{x_1} - x_2 \partial_{y_1} + y_1 \partial_{x_2} + x_1 \partial_{y_2}.
\end{split}
\end{equation}
The structure functions in \eqref{struct1} are identically constant: $C_0=1$, $C_1 = C_2=0$.

\medskip

\noindent It is often useful to work in the \textit{Hopf coordinates}: $(x_1, y_1, x_2, y_2)=(\cos s \, e^{i\phi_1}, \sin s \, e^{i\phi_2})$, $s \in [0, \pi/2]$, $\phi_i \in [0, 2\pi)$. The round metric reads $g=\dif s ^2 + \cos^2 s \dif \phi_1^2 + \sin^2 s \dif \phi_2^2$ and standard orthonormal frame above becomes
\begin{equation} \label{hopf}
\begin{split}
\xi &= \partial_{\phi_1} + \partial_{\phi_2}, \\
X_1 &= \cos(\phi_1 + \phi_2)\partial_s +\sin(\phi_1 + \phi_2)(\tan s \, \partial_{\phi_1} - \cot s \, \partial_{\phi_2}), \\
X_2 &= \sin(\phi_1 + \phi_2)\partial_s -\cos(\phi_1 + \phi_2)(\tan s \, \partial_{\phi_1} - \cot s \, \partial_{\phi_2}) .
\end{split}
\end{equation}

\medskip

\noindent The eigenspace of $\cu$ operator associated to the first negative eigenvalue $\mu_{-1} = -2$ is spanned by the following three Killing vector fields (obtained from \eqref{hopf} by using the orientation reversing isometry $(x_1, y_1, x_2, y_2)  \mapsto (x_1, y_1, x_2, - y_2)$ of $\Ss^3$):
\begin{equation} \label{otherhopf}
\begin{split}
\xi^{\prime} &= \partial_{\phi_1} - \partial_{\phi_2}, \\
X_1^{\prime} &= \cos(\phi_1 - \phi_2)\partial_s +\sin(\phi_1 - \phi_2)(\tan s \partial_{\phi_1} + \cot s \partial_{\phi_2}), \\
X_2^{\prime} &= \sin(\phi_1 - \phi_2)\partial_s -\cos(\phi_1 - \phi_2)(\tan s \partial_{\phi_1} + \cot s \partial_{\phi_2}) .
\end{split}
\end{equation}

\subsection{Twin and mirror solutions}\label{twins}
The same solution can appear disguised in various forms, that are isometrically related. We provide here a list of such "twins" of the solutions discussed in this paper.

\noindent   The following vector fields (the second being the "mirror" of the first one)
\begin{equation} \label{kkps1}
\begin{split}
u&=F(\sin 2s \sin(\phi_1 + \phi_2)) X_1 + G(\sin 2s \sin(\phi_1 + \phi_2)) \xi^\prime \\
u&=F(\sin 2s \sin(\phi_1 - \phi_2)) X_1^\prime + G(\sin 2s \sin(\phi_1 - \phi_2)) \xi
\end{split}
\end{equation}
are steady solutions of the Euler equation for any choice of functions $F$ and $G$. Remark that $\sin 2s \sin(\phi_1 + \phi_2)$ is a first integral for both fields $X_1$ and $\xi^\prime$. The isometry defined by  $x_1^\prime = \frac{1}{\sqrt{2}} (x_1 - y_2), y_1^\prime = \frac{1}{\sqrt{2}} (y_1 - x_2), 
 x_2^\prime = \frac{1}{\sqrt{2}} (x_2 + y_1), y_2^\prime =  \frac{1}{\sqrt{2}} (x_1 + y_2)$ pulls $\xi$ back\footnote{Here by pull-back of a vector field $X$ through some mapping $\varphi$ we mean $(\varphi^* X^\flat)^\sharp$ and the pull-back of a function $f$ on the codomain is $f\circ \varphi$ defined on the domain of $\varphi$.} to $-X_1$, the prime integral $x_1^2 + y_1^2 - x_2^2 - y_2^2$ back to $-2 (x_1 y_2 + x_2 y_1)$, and it preserves $\xi'$. Therefore the solutions \eqref{kkps1} are isometrically equivalent to the KKPS solutions.

\medskip

\noindent Analogously we have the solutions:
\begin{equation} \label{kkps1b}
\begin{split}
u&=F(\sin 2s \cos(\phi_1 + \phi_2)) X_2 + G(\sin 2s \cos(\phi_1 + \phi_2)) \xi^\prime \\
u&=F(\sin 2s \cos(\phi_1 - \phi_2)) X_2^\prime + G(\sin 2s \cos(\phi_1 - \phi_2)) \xi
\end{split}
\end{equation}

\bigskip

\noindent Another solutions akin to \eqref{kkps2} are
\begin{equation} \label{kkps2alter}
\begin{split}
u & = a\cos 2s \, \xi + b \sin 2s \sin(\phi_1 + \phi_2) X_1, \\
u & = a\cos 2s \, \xi + b \sin 2s \cos(\phi_1 + \phi_2) X_2.
\end{split}
\end{equation}

\bigskip

\noindent A solution based on $X_1$ and $X_2^\prime$ (and its mirror) is:
\begin{small}
\begin{equation} \label{kkps3}
\begin{split}
u & = F(\cos^2 s\, \sin(2\phi_1)-\sin^2 s\, \sin(2\phi_2)) X_1 +
G(\cos^2 s\, \sin(2\phi_1)-\sin^2 s\, \sin(2\phi_2)) X_2^\prime \\
u & = F(\cos^2 s\, \sin(2\phi_1)+\sin^2 s\, \sin(2\phi_2)) X_1^\prime +
G(\cos^2 s\, \sin(2\phi_1)+\sin^2 s\, \sin(2\phi_2)) X_2.
\end{split}
\end{equation}
\end{small}

\noindent Finally, we have the solution based on $X_1$ and $X_1^\prime$ and on $X_2$ and $X_2^\prime$ (which are their own mirrors):
\begin{small}
\begin{equation} \label{kkps3}
\begin{split}
u & = F(\cos^2 s\, \cos(2\phi_1)+\sin^2 s\, \cos(2\phi_2)) X_1 +
G(\cos^2 s\, \cos(2\phi_1)+\sin^2 s\, \cos(2\phi_2)) X_1^\prime \\
u & = F(\cos^2 s\, \cos(2\phi_1)-\sin^2 s\, \cos(2\phi_2)) X_2 +
G(\cos^2 s\, \cos(2\phi_1)-\sin^2 s\, \cos(2\phi_2)) X_2^\prime.
\end{split}
\end{equation}
\end{small}


\end{document}